\newcommand*{\defeq}{\mathrel{\vcenter{\baselineskip0.5ex \lineskiplimit0pt 
                     \hbox{\scriptsize.}\hbox{\scriptsize.}}}%
                     =} 
\newtheorem{theorem}{Theorem}[section]
\newtheorem{lemma}[theorem]{Lemma}
\theoremstyle{definition}
\newtheorem{definition}[theorem]{Definition}
\theoremstyle{remark}
\journal{Finite Fields and Their Applications}
\begin{document}

\begin{frontmatter}


\title{Polytope Bounds on Multivariate Value Sets} 
\author{Luke Smith}
\ead{smithla@uci.edu}
\address{340 Rowland Hall (Bldg. \# 400), University of California, Irvine. Irvine, CA 92697-3875.}

\title{Polytope Bounds on Multivariate Value Sets}




\begin{abstract}
We improve upon the upper bounds for the cardinality of the value set of a multivariable polynomial map over a finite field using the polytope of the polynomial. This generalizes earlier bounds only dependent on the degree of a polynomial.

\end{abstract}

\begin{keyword}
Value Set \sep polynomial image set \sep multivariate polynomials \sep Newton polytopes \sep $p$-adic
liftings
\MSC[2010] 11T06 \sep 11T55 \sep 11H06
\end{keyword}

\end{frontmatter}


\section{Introduction}

For a given polynomial $f(x)$ over a finite field $\mathbb{F}_q$, let $V_f \defeq $ Im$(f) $ denote the value set of $f$. Determining the cardinality and structure of the value set is a problem with a rich history and wide variety of uses in number theory, algebraic geometry, coding theory and cryptography.

\vspace{1pc}

Relevant to this paper are theorems which provide upper bounds on the cardinality of our value set when $f(x)$ is not a permutation polynomial. Let $f(x) \in \mathbb{F}_q[x]$ be a single variable polynomial of degree $d > 0$ with $|V_f| < q$. Using the Chebotarev density theorem over rational function fields, S. D. Cohen proved in \cite{Cone} that there is a finite set of rational numbers $T_d \subset [0,1]$ (depending on degree $d$) such that \begin{eqnarray} \label{Cohen} |V_f| = c_fq + O_d(\sqrt{q}) \end{eqnarray} for some $c_f \in T_d$ depending on Gal($f(x) - t$)/$\mathbb{F}_q(t)$ and Gal($f(x) - t$)/$\overline{\mathbb{F}}_q(t)$. Guralnick and Wan refine this in \cite{Guralnick}, proving that for gcd($d,q$) = 1 and $|V_f| < q$, $|V_f| \leq \frac{47}{63}q + O_d(\sqrt{q})$. In addition, Mullen conjectured the bound \begin{eqnarray} \label{SingleWan} |V_f| \leq q - \frac{q-1}{d} \end{eqnarray} for non-permutation polynomials. This was proven by Wan in \cite{plemma} using $p$-adic liftings, but Turnwald later averted the use of liftings with a clever proof in \cite{Turnwald} using elementary symmetric polynomials. This bound was also proven sharp for any finite field by Cusick and M$\ddot{\textrm{u}}$ller (for $f(x) = (x+1)x^{q-1} \in \mathbb{F}_{q^k}[x]$, $|V_f| = q^k - \frac{q^k-1}{q}$ for all integers $k$, see \cite{sharp}). For more sharp examples, see \cite{WSCsharp}.

\vspace{1pc}

Despite the interest mathematicians have taken in the value set problem, most of the work in this area has been dedicated towards univariate polynomials. However, In the past 25 or so years, the multivariate value set problem has been addressed in a few different forms. It was first addressed by Serre in 1988 \cite{SerreGal} over varieties, in connection with Hilbert's irreducibility theorem and the inverse Galois problem. His theorem, alongside results by Fried \cite{FriedHilb} and by Guralnick and Wan \cite{Guralnick} give us upper bounds on our value set which generalize Cohen's result in (\ref{Cohen}). Though these results bound $|V_f|$ by some fraction of $|\mathbb{F}_q^n|$, it is important to note that the error terms in both results, though well behaved with respect to $q$, are unpredictable in terms of the degree $d$ of the map.

\vspace{1pc}

A recently published paper by Mullen, Wan, and Wang (see \cite{MWWValue}) gives another bound on the value set of polynomial maps, one with no error terms: \begin{eqnarray} \label{MWWBound}\textrm{if }|V_f| < q^n, \textrm{ then }|V_f| \leq q^n - \textrm{min} \left\{ q, \hspace{.25pc} \frac{n(q-1)}{\textrm{deg }f}\right\}. \end{eqnarray} In this paper, we set out to improve upon the above result by generalizing Wan's $p$-adic lifting approach and utilizing the Newton polytope of the multivariate polynomial. We define a quantity $\mu_f$ in section \ref{NewPoly} based on the Newton polytope, one which has the property $\mu_f \geq n / \textrm{deg } f$ (see \cite{Adolph}). The result is as follows: 

\begin{theorem} Let $f(x_1,...,x_n)=(f_1(x_1,...,x_n),...,f_n(x_1,...,x_n))$ be a polynomial vector over the vector space $\mathbb{F}_q^n$. \begin{eqnarray*} \textrm{If } |V_f| < q^n, \textrm{ then }|V_f| \leq q^n - \textnormal{min}\{q,\hspace{.25pc} \mu_f(q-1)\}.\end{eqnarray*} \end{theorem}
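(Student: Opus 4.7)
The plan is to generalize Wan's $p$-adic lifting proof of the univariate bound (\ref{SingleWan}) to the multivariate setting, using the polytope invariant $\mu_f$ in place of the degree. At a high level, I will express the value-set defect $q^n - |V_f|$ in terms of multivariate power sums of the values $f(\vec{a})$, use the Newton polytope to show these vanish in $\mathbb{F}_q$ over a large range of exponents, and then convert these vanishings into a lower bound on the defect via a Turnwald-style elementary-symmetric-function argument.

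For the setup, write $V_f = \{\vec{b}_1,\ldots,\vec{b}_s\}$ with multiplicities $m_i = |f^{-1}(\vec{b}_i)|$, so that $\sum_i m_i = q^n$. For each multi-index $\vec{k} \in \mathbb{Z}_{\geq 0}^n$ introduce the multivariate power sum
$$S_{\vec{k}} = \sum_{\vec{a}\in\mathbb{F}_q^n}\prod_{j=1}^n f_j(\vec{a})^{k_j} = \sum_{i=1}^s m_i\prod_j (\vec{b}_i)_j^{k_j} \in \mathbb{F}_q.$$
Since $\sum_{a\in\mathbb{F}_q} a^e = 0$ unless $e$ is a positive multiple of $q-1$, one has $S_{\vec{k}}=0$ in $\mathbb{F}_q$ whenever $\prod_j f_j(\vec{x})^{k_j}$ contains no monomial whose every exponent is a positive multiple of $q-1$. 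The support of $\prod_j f_j^{k_j}$ lies inside the Minkowski sum $\sum_j k_j\,\Delta(f_j)$ of Newton polytopes, and the invariant $\mu_f$ from section \ref{NewPoly} is designed precisely so that this Minkowski sum avoids every such forbidden lattice point as long as $\sum_j k_j$ is controlled by $\mu_f(q-1)$. This gives the vanishing $S_{\vec{k}}=0$ over a range of $\vec{k}$ which, by the Adolphson inequality $\mu_f \geq n/\deg f$ from \cite{Adolph}, already subsumes the range needed for (\ref{MWWBound}).

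To finish I must extract a lower bound on $q^n - s$ from these vanishings. A natural vehicle is the slicing polynomial $G_{\vec{y}}(T) = \prod_{\vec{a}\in\mathbb{F}_q^n}\bigl(T - \vec{y}\cdot f(\vec{a})\bigr)$ attached to a weight $\vec{y}\in\mathbb{F}_q^n$; its coefficients in $T$ are elementary symmetric functions of $\{\vec{y}\cdot f(\vec{a})\}$, hence polynomial combinations of the $S_{\vec{k}}$ via Newton's identities, so the vanishing of $S_{\vec{k}}$ on a long initial segment forces a correspondingly long top block of the coefficients of $G_{\vec{y}}$ to match those of the bijective baseline $(T^q - T)^{q^{n-1}}$. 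The difference $G_{\vec{y}}(T) - (T^q - T)^{q^{n-1}}$ then has bounded degree and vanishes on $\vec{y}\cdot V_f$, which should yield the desired inequality after a judicious choice of $\vec{y}$. I expect this final step to be the main obstacle: the clean univariate identity $\prod_{a\in\mathbb{F}_q}(t-a) = t^q - t$ has no tidy multivariate analog, and one cannot pick a single $\vec{y}$ that simultaneously separates all of $V_f$ and keeps the polytope data sharp; I will likely need to either average over $\vec{y}$, iterate one coordinate at a time, or work intrinsically with multivariate symmetric functions while $p$-adically lifting the integer multiplicities $m_i$, and I expect the latter lifting is what ultimately produces the $q$ branch in $\min\{q,\mu_f(q-1)\}$.
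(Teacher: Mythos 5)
Your first stage is fine in spirit: the vanishing of power sums for exponents controlled by the polytope is exactly the kind of monomial-by-monomial analysis the paper carries out (every relevant exponent vector either has a coordinate not divisible by $q-1$, or lies in $(q-1)\mathbb{Z}^n_{>0}$ and hence witnesses $\frac{k}{q-1}\Delta \cap \mathbb{Z}^n_{>0}\neq\varnothing$, forcing $k\geq\mu_f(q-1)$). The proof breaks at the step you yourself flag as the obstacle, and no choice of, or averaging over, $\vec{y}$ repairs it. The slicing polynomial $G_{\vec{y}}(T)=\prod_{\vec{a}\in\mathbb{F}_q^n}(T-\vec{y}\cdot f(\vec{a}))$ has degree $q^n$ but at most $q$ distinct roots, so the only quantity this symmetric-function bookkeeping can control is $|\vec{y}\cdot V_f|$, a subset of $\mathbb{F}_q$. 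Knowing $|\vec{y}\cdot V_f|\leq q-c$ means $V_f$ misses $c$ entire hyperplanes, i.e.\ $q^n-|V_f|\geq cq^{n-1}$; conversely, a defect of size $\min\{q,\mu_f(q-1)\}\leq q$ is invisible to every slice. The paper's own sharp example $f(x_1,x_2)=(x_1,x_1^ax_2)$ shows this concretely: $|V_f|=q^2-(q-1)<q^2$, yet $\vec{y}\cdot V_f=\mathbb{F}_q$ for every nonzero $\vec{y}$, so $G_{\vec{y}}(T)-(T^q-T)^{q^{n-1}}$ (the latter is just $T^{q^n}-T^{q^{n-1}}$) vanishes on $\vec{y}\cdot V_f$ without yielding any inequality. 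A second problem is that your $S_{\vec{k}}$ live in $\mathbb{F}_q$: since the fiber counts in this setting are typically divisible by high powers of $p$, mod-$p$ Newton-identity arguments collapse; the univariate lemma actually needed here is the $p$-adic one, requiring non-vanishing of Teichm\"uller-lifted sums modulo $pk$, not merely modulo $p$.

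The missing idea is that the paper linearizes upward into $\mathbb{F}_{q^n}$ rather than slicing downward into $\mathbb{F}_q$. Fixing a basis $e_1,\dots,e_n$ of $\mathbb{F}_{q^n}$ over $\mathbb{F}_q$ and setting $g=f_1e_1+\cdots+f_ne_n$ produces a single univariate self-map of $\mathbb{F}_{q^n}$ with $|V_g|=|V_f|$ exactly (the basis identification is a bijection, unlike any $\vec{y}\cdot$), so Lemma~\ref{Ulemma} of Mullen--Wan--Wang applies verbatim and gives $|V_f|\leq q^n-U(g)$. The polytope then enters where you wanted it to: writing $S_k(g)=\sum_{(x_1,\dots,x_n)\in L_q^n}g(x_1,\dots,x_n)^k$ over the Teichm\"uller points and expanding monomial by monomial, a monomial either sums to zero outright, or has all exponents positive and divisible by $q-1$ (impossible for $k<\mu_f(q-1)$ by definition of $\mu_f$), or has a zero exponent, contributing a factor $q^{n-\ell_j}$ which, combined with $k<q$, gives $S_k(g)\equiv 0 \pmod{pk}$. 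This proves $U(g)\geq\min\{q,\mu_f(q-1)\}$ and hence the theorem; as you suspected, the $q$ branch of the minimum comes precisely from the $p$-adic (mod $pk$) formulation, which your mod-$p$ slicing framework cannot reach.
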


\section{The Newton Polytope}
\label{NewPoly}

Let $F$ be an arbitrary field and let $h \in F[x_1,...,x_n]$. If we write $h$ in the form \begin{eqnarray*} h(x_1,...,x_n) = \sum_{j=1}^m a_jX^{V_j}, \hspace{1 pc} a_j \in F^{*} \end{eqnarray*} where \begin{eqnarray} \label{polynota} V_j = (v_{1j},...,v_{nj}) \in \mathbb{Z}_{\geq 0}^n, \hspace{1 pc} X^{V_j} = x_1^{v_{1j}}...x_n^{v_{nj}}, \end{eqnarray} then we have the following definition:

\begin{definition}[Newton polytope] The Newton polytope of polynomial $h \in F[x_1,...,x_n]$, $\Delta(h)$, is the convex closure of the set $\{V_1,...,V_m\} \cup \{(0,...,0)\}$ in $\mathbb{R}^n$.
\end{definition}

Geometric properties of the Newton polytope, such as its dilation by $k \in \mathbb{R}$, its volume or its decomposition into other polytopes via Minkowski Sum, are useful tools in discerning properties of their associated polynomials. For more information, see \cite{WeiCao}, \cite{zeta}, and \cite{variation}.

\vspace{1pc}

The significance of the Newton polytope to the multivariate value set problem comes from the definition of the following quantity:

\begin{definition}[The quantity $\mu_f$] Let $F$ be a field, let $h \in F[x_1,...,x_n]$, and let $\Delta(h)$ be the Newton polytope of $h$. \begin{eqnarray*} \mu_h \defeq \textrm{inf}\{ k \in \mathbb{R}_{>0} \mid k\Delta(h) \cap \mathbb{Z}^n_{>0} \neq \varnothing \}. \end{eqnarray*}
\end{definition}

In other words, $\mu_h$ is the infimum of all positive real numbers $k$ such that the dilation of $\Delta(h)$ by $k$ contains a lattice point with strictly positive coordinates, and we define $\mu_h = \infty$ if such a dilation does not exist. For our purposes, since the vertices of our polytopes have integer coordinates, $\mu_h$ will always be finite and rational so long as we consider $h$ which is not a polynomial in some proper subset of $x_1,..., x_n$. This quantity is used by Adolphson and Sperber \cite{Adolph} to put a lower bound on the $q$-adic valuation ord$_q$ of the number of $\mathbb{F}_q$-rational points on a variety $V$, $N(V)$, over $\mathbb{F}_q$. Namely, let $V = V(f_1,...,f_m)$, where $f_i \in \mathbb{F}_q[x_1,...,x_n]$. If the collection of polynomials $f_1,..., f_m$ is not polynomial in some proper subset of $x_1,..., x_n$, then we have for $f(x_1,..., x_n,y_1,...,y_m) = f_1(x_1,...,x_n)y_1 + \cdots + f_m(x_1,...,x_n)y_m$, \begin{eqnarray*} \textrm{ord}_q (N(V)) \geq  \mu_f - m. \end{eqnarray*}

Note that in the above definitions, the multivariate polynomial $h$ maps the vector space $F^n$ into its base field $F$. However, for the value set problem, we are interested in studying the polynomial vector $f:\mathbb{F}_q^n \longrightarrow \mathbb{F}_q^n$. Fortunately, the definitions we have developed in this section can be extended to polynomial vectors. To properly motivate how this extension arises in our theorem, let us first understand the proof of the univariate result given in (\ref{SingleWan}) and discern how to generalize to the multivariate case in section \ref{From2}.

\section{Single variable value set}
\label{Single}

Before proving our main result, we will provide insight into upper bounds of $|V_f|$ for the case when $f$ is a single variable polynomial. Parts of this proof will generalize to the multivariate case.

\begin{theorem} \label{SingleVar} Let $f(x) \in \mathbb{F}_q[x]$ be a single variable polynomial of degree $d > 0$. If $|V_f| < q$, then \begin{eqnarray*}|V_f| \leq q - \frac{q-1}{d}.\end{eqnarray*}
\end{theorem}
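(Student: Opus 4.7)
Set $r := q - |V_f|$, so the hypothesis $|V_f| < q$ gives $r \geq 1$, and the desired bound is equivalent to $rd \geq q - 1$. I would introduce the monic degree-$q$ polynomial
\[
g(y) := \prod_{x \in \mathbb{F}_q}(y - f(x)) \in \mathbb{F}_q[y],
\]
all of whose roots lie in $V_f$, and attack the inequality by pinching the degree of the residual polynomial $\rho(y) := g(y) - (y^q - y)$ from two sides.

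For every $v \in V_f$ both $g(v) = 0$ and $v^q - v = 0$, so $\rho$ vanishes on $V_f$ and is therefore divisible by $\prod_{v \in V_f}(y - v)$. Assuming $\rho \not\equiv 0$ (otherwise $g(y) = y^q - y$ would force $f$ to be a permutation, contradicting $|V_f| < q$), this gives the lower bound $\deg \rho \geq s := q - r$. In the other direction, the coefficient of $y^{q-k}$ in $g$ equals $(-1)^k e_k$, the $k$-th elementary symmetric function of the multiset $\{f(x) : x \in \mathbb{F}_q\}$. I would show that $e_k = 0$ in $\mathbb{F}_q$ for all $1 \leq k \leq K := \lfloor (q-2)/d \rfloor$, which forces the upper bound $\deg \rho \leq q - K - 1$. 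Combining the two inequalities gives $s \leq q - K - 1$, equivalently $K \leq r - 1$, which rearranges to $rd \geq q - 1$ as desired.

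The technical heart, and the main obstacle, is the vanishing of the $e_k$. The natural route is through the power sums $p_k := \sum_{x \in \mathbb{F}_q} f(x)^k$: since $f(x)^k$ is a polynomial in $x$ of degree $kd \leq q - 2$, and $\sum_{x \in \mathbb{F}_q} x^j = 0$ in $\mathbb{F}_q$ for $1 \leq j \leq q - 2$, the only surviving contribution to $p_k$ comes from the constant term of $f(x)^k$ (multiplied by $q$). Newton's identities $k e_k = \sum_{j=1}^{k}(-1)^{j-1} p_j e_{k-j}$ would then convert vanishing $p_k$ into vanishing $e_k$, except that the leading factor $k$ fails to be invertible in characteristic $p$ whenever $p \mid k$. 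I would bypass this by lifting the setup to the Witt vectors $W(\mathbb{F}_q)$ via Teichm\"uller representatives: in characteristic zero the corresponding lifts $\hat p_k$ satisfy $\textrm{ord}_p(\hat p_k) \geq m$ (with $q = p^m$), and Newton's identities applied in $\mathbb{Z}_q$ yield the inductive bound $\textrm{ord}_p(\hat e_k) \geq m - \textrm{ord}_p(k)$. Since $k \leq K < q$ always forces $\textrm{ord}_p(k) \leq m - 1$, we get $p \mid \hat e_k$ and hence $e_k = 0$ in $\mathbb{F}_q$. This $p$-adic lifting is precisely the philosophy that will carry over to the multivariate case, where it is reorganized through the Newton polytope and the quantity $\mu_f$.
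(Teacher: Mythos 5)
Your proposal is correct, and its engine --- Teichm\"uller lifting, the character sum (\ref{charsum}), Newton's identities, and the comparison of $\prod_{x}(y-f(x))$ with $y^q-y$ --- is exactly the $p$-adic machinery behind the paper's treatment; the difference is one of packaging. The paper proves Theorem~\ref{SingleVar} by factoring through the quantity $U(f)$: it deduces $\frac{q-1}{d}\le U(f)$ from (\ref{charsum}) (your observation that the lifted power sums satisfy $\mathrm{ord}_p(\hat p_k)\ge m$ whenever $kd\le q-2$ is the same computation), and then invokes Lemma~\ref{Ulemma}, $|V_f|\le q-U(f)$, whose proof it does not reproduce but cites from Mullen--Wan--Wang and Wan--Shiue--Chen. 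Your argument inlines that cited lemma in this special case: the vanishing $e_k=0$ for $1\le k\le\lfloor(q-2)/d\rfloor$ via $\mathrm{ord}_p(\hat e_k)\ge m-\mathrm{ord}_p(k)$ (which indeed needs no induction, since the $\hat e_{k-j}$ are $p$-adic integers), followed by the degree pinch on $\rho(y)=g(y)-(y^q-y)$, is precisely how those references establish $|V_f|\le q-U(f)$; your bookkeeping $r\ge K+1\iff rd\ge q-1$ and the handling of the case $\rho\equiv 0$ are also correct. What your version buys is a self-contained proof of the univariate bound; what the paper's factorization buys is the isolation of $U(f)$ as the pivotal quantity, which is what survives the passage to several variables, where the analogous $U(g)$ is bounded below by the polytope quantity $\mu_f$ in Lemma~\ref{Uglemma}.
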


The proof of this theorem relies on the following definition:

\begin{definition}[The quantity $U(f)$] Let $\mathbb{Z}_q$ denote the ring of $p$-adic integers with uniformizer $p$ and residue field $\mathbb{F}_q$. Fix a lifting $\tilde{f}(x) \in \mathbb{Z}_q[x]$ of $f$, taking coefficients from the Teichm$\ddot{\textrm{u}}$ller lifting $L_q \subset $ $\mathbb{Z}_q$ of $\mathbb{F}_q$. Then we define $U(f)$ to be the smallest positive integer $k$ such that the sum \begin{eqnarray*} S_k(f) \defeq \sum_{x \in L_q} \tilde{f}(x)^k \not\equiv 0 \textrm{ (mod } pk). \end{eqnarray*}
\end{definition}

By taking into account the following sum,
\begin{eqnarray} \label{charsum}   \sum_{x \in L_q} x^k = \left\{     \begin{array}{ll}       0, & q-1 \nmid k,\\        q-1, & q-1 \mid k, k \neq 0,\\  q, & k = 0,     \end{array}   \right.\end{eqnarray} 
and remembering that we are only summing over a finite number of terms, we have the following inequality for $f$ not identically zero:

$$ \frac{q-1}{d} \leq U(f) \leq q-1. $$
With the above inequality in mind, Theorem \ref{SingleVar} will follow directly from the following lemma:

\begin{lemma} \label{Ulemma} If $\left|V_f \right| < q$, then \begin{eqnarray*} \left|V_f \right| \leq q - U(f).\end{eqnarray*}
\end{lemma}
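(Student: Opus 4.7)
The plan is a proof by contradiction. Assume $|V_f|=s<q$ and suppose, toward contradiction, that $S_k(f)\equiv 0\pmod{pk}$ for every $k=1,\dots,q-s$; I will show this forces $\bar h(T):=\prod_{\bar x\in\mathbb{F}_q}(T-f(\bar x))$ to vanish on some point outside $V_f$, which is absurd.

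First I would package the power sums $S_k(f)$ into the formal generating identity
\[
\log h^*(T)=-\sum_{k\ge 1}\frac{S_k(f)}{k}\,T^k,\qquad h^*(T):=\prod_{x\in L_q}\bigl(1-\tilde f(x)T\bigr)\in\mathbb{Z}_q[T],
\]
whose coefficient of $T^k$ in $h^*(T)$ is $(-1)^k\sigma_k$, the $k$-th elementary symmetric polynomial in the lifted values $\tilde f(x)$. Under the contrary hypothesis, each $S_k/k$ with $k\le q-s$ lies in $p\mathbb{Z}_q$, so $\log h^*(T)\equiv pL(T)\pmod{T^{q-s+1}}$ for some $L\in\mathbb{Z}_q[T]$. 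Exponentiating and using the standard bound $v_p(p^j/j!)\ge 1$ for $j\ge 1$ (a consequence of Legendre's formula, valid uniformly in $p$), I would conclude
\[
h^*(T)\equiv\exp(pL(T))\equiv 1\pmod{(p,T^{q-s+1})},
\]
whence $\sigma_k\equiv 0\pmod p$ for $k=1,\dots,q-s$.

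Next I would pass to the reversed polynomial $\bar h(T)=\prod_{\bar x\in\mathbb{F}_q}(T-f(\bar x))\in\mathbb{F}_q[T]$, whose coefficient of $T^{q-k}$ is (up to sign) the reduction $\bar\sigma_k$. The previous step forces the coefficients of $T^s,T^{s+1},\dots,T^{q-1}$ in $\bar h(T)$ to vanish, so
\[
\bar h(T)=T^q+\bar a_{s-1}T^{s-1}+\cdots+\bar a_0.
\]
Using $c^q=c$ on $\mathbb{F}_q$, the function $c\mapsto\bar h(c)$ coincides pointwise with the polynomial $g(c):=c+\bar a_{s-1}c^{s-1}+\cdots+\bar a_0$, which has degree at most $s-1$ whenever $s\ge 2$.

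To finish, I would observe that $\bar h(c)=\prod_i(c-y_i)^{N_i}$ vanishes exactly on $V_f$ and is nonzero on the nonempty complement $W=\mathbb{F}_q\setminus V_f$. But $g$ has degree $\le s-1$ and vanishes at the $s$ distinct points of $V_f$, so $g\equiv 0$; thus $\bar h$ vanishes on $W$ as well, contradicting $W\neq\varnothing$. The edge case $s=1$ is immediate from the already-noted inequality $U(f)\le q-1=q-s$. The principal technical obstacle is the $p$-adic bookkeeping in the $\log$/$\exp$ manipulation: one must check that truncating $\exp(pL(T))$ at degree $q-s$ really does give a series congruent to $1$ modulo $p$ for every residue characteristic, including $p=2$, where the $p$-adic exponential behaves most delicately.
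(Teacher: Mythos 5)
The paper itself does not prove this lemma---it defers to Mullen--Wan--Wang---so the relevant comparison is with the argument it outsources, and your proposal is essentially a correct reconstruction of that standard proof. The first half (packaging the power sums into $\log\prod_{x\in L_q}(1-\tilde f(x)T)$, exponentiating, and using $v_p(p^j/j!)\ge 1$ for $j\ge 1$, which indeed holds for every $p$ including $p=2$ since $v_p(j!)\le (j-1)/(p-1)\le j-1$, to convert the hypotheses $S_k\equiv 0\pmod{pk}$, $1\le k\le q-s$, into $\sigma_k\equiv 0\pmod p$) is exactly Wan's $p$-adic lifting lemma; the second half (reducing mod $p$, using $c^q=c$ to replace $\bar h$ on $\mathbb{F}_q$-points by a polynomial of degree at most $s-1$ that vanishes at the $s$ points of $V_f$, hence vanishes identically, contradicting nonvanishing on the nonempty complement of $V_f$) is the Turnwald-style elementary-symmetric-function step used there. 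Your bookkeeping is sound: since $L$ has no constant term, only the terms $j\le k$ of $\exp(pL(T))$ contribute to the coefficient of $T^k$, so each coefficient in degrees $1,\dots,q-s$ is a finite sum of elements of $p\mathbb{Z}_q$, and the truncation of the $\log$/$\exp$ identity at $T^{q-s+1}$ is a legitimate formal manipulation over $\mathbb{Q}_q$.

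The one soft spot is the case $s=1$, which you dispatch by citing the inequality $U(f)\le q-1$. That inequality is asserted but not proved in the paper, and it in fact fails exactly when $s=1$: if $f$ induces a constant map, then writing $\tilde f(x)=\tilde c+pu_x$ on $L_q$ one checks $S_k\equiv 0\pmod{pk}$ for every $k\le q-1$, so $U(f)\ge q$ (and $U(f)$ is undefined for, e.g., $f(x)=x^q-x$), which means the lemma as stated is itself degenerate for constant maps. So for $s\ge 2$ your proof is complete, correct, and follows the same route as the cited source; for $s=1$ you are leaning on an assertion of the paper that does not hold in general---a defect inherited from the statement rather than introduced by you, but worth flagging explicitly.
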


The proof of this result is given in the paper by Mullen, Wan, and Wang. See \cite{MWWValue} for the full proof, and see \cite{WSCsharp} for more details regarding this theorem.

\section{From single variable to multivariable}
\label{From2}

Let $f(x_1,...,x_n)=(f_1(x_1,...,x_n),...,f_n(x_1,...,x_n))$ be a polynomial vector, and note $\textrm{deg } f = \textrm{max}_i\{\textrm{deg } f_i\}$. This maps the vector space $\mathbb{F}_q^n$ to itself. Now, take a basis $e_1,...,e_n$ of $\mathbb{F}_{q^n}$ over $\mathbb{F}_q$. Denote $x = x_1e_1 + \cdots + x_ne_n$ and define \begin{eqnarray*}g(x) \defeq f_1(x_1,...,x_n)e_1+ \cdots +f_n(x_1,...,x_n)e_n. \end{eqnarray*} In this way, we can think of the function $g$ as a non-constant univariate polynomial map from the finite field $\mathbb{F}_{q^n}$ to itself. Even better, we have the equality $|V_f| = |g(\mathbb{F}_{q^n})|$. Therefore, using Lemma~\ref{Ulemma}, we know \begin{eqnarray*} \textrm{if } |V_f| < q^n, \textrm{then } |V_f| \leq q^n - U(g),\end{eqnarray*} where $g$ is viewed as a univariate polynomial.

\vspace{1pc}

Unfortunately, as a univariate polynomial, we do not have good control of the univariate degree of $g$ in relation to the multivariate degree of $f$. Even if one were to construct a closed form for $g(x)$ using methods such as Lagrange Interpolation, the degree of $g$ would likely be high enough as to make the resulting upper bound on $|V_f|$ trivial. Because of these issues with the degree of $g$, we cannot use the bounds from the previous section directly, and must rely on another method to bound $U(g)$.

\vspace{1pc}

Previously, we introduced $g(x)$ as a univariate polynomial. However, using a basis $e_1,...,e_n$ of $\mathbb{F}_{q^n}$ over $\mathbb{F}_q$ as before, we can also define a multivariate polynomial \begin{eqnarray*}g(x_1,...,x_n) \defeq f_1(x_1,...,x_n)e_1+ \cdots +f_n(x_1,...,x_n)e_n \end{eqnarray*} mapping the vector space $\mathbb{F}_q^n$ into the field $\mathbb{F}_{q^n}$. In this sense, $g$ as a multivariate polynomial shares some important properties with $f$ as a polynomial vector, such as the fact that deg($g$) = $\textrm{max}_i\{\textrm{deg } f_i\}$. Whereas the paper by Mullen, Wan, and Wang determine a bound for $U(g)$ relying on the multivariate degree of $f$, in this paper we will use the Newton polytope of the multivariate polynomial $g(x_1,...,x_n)$ to improve upon these bounds. With this in mind, we define $\Delta(f) \defeq \Delta(g(x_1,...,x_n))$, $\mu_f \defeq \mu_{g(x_1,...,x_n)}$, and prove the main result of our paper.   

\section{Restatement of Main Theorem and Proof}
\label{Multi}

\begin{theorem} \label{mubound} Let $f(x_1,...,x_n)=(f_1(x_1,...,x_n),...,f_n(x_1,...,x_n))$ be a polynomial vector over the vector space $\mathbb{F}_q^n$. If $|V_f| < q^n$, then \begin{eqnarray*}|V_f| \leq q^n - \textnormal{min}\{q,\hspace{.25pc} \mu_f(q-1)\}.\end{eqnarray*}
\end{theorem}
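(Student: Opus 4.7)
The plan is to reduce to Lemma~\ref{Ulemma} via the construction of Section~\ref{From2}, and then bound $U(g)$ by a character-sum computation in which the Newton polytope $\Delta(f)$ controls which monomials can contribute.

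First, following Section~\ref{From2}, I view $g(x_1,\ldots,x_n)=\sum_i f_i(x_1,\ldots,x_n)e_i$ as a univariate polynomial map $\mathbb{F}_{q^n}\to\mathbb{F}_{q^n}$ so that $|V_f|=|V_g|$, and apply Lemma~\ref{Ulemma} over $\mathbb{F}_{q^n}$ to reduce the theorem to the lower bound $U(g)\geq \min\{q,\mu_f(q-1)\}$. Rather than trying to estimate $U(g)$ through the (uncontrollably high-degree) univariate lift of $g$, I would work instead with the multivariate Teichm\"uller lift $\tilde g(x_1,\ldots,x_n)=\sum_i \tilde f_i(x_1,\ldots,x_n)\tilde e_i\in\mathbb{Z}_{q^n}[x_1,\ldots,x_n]$ and the associated sum $S_k(g)=\sum_{(\tilde x_1,\ldots,\tilde x_n)\in L_q^{\,n}}\tilde g(\tilde x_1,\ldots,\tilde x_n)^k$, which computes the same quantity used to test $U(g)$ after the natural identification $L_{q^n}\leftrightarrow L_q^{\,n}$.

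The computational step is to expand $\tilde g^k=\sum_V c_V X^V$, noting that its support lies inside $k\Delta(g)=k\Delta(f)$, and then apply identity~(\ref{charsum}) coordinatewise. Only those exponent vectors $V=(v_1,\ldots,v_n)$ with every $v_i$ equal to $0$ or to a positive multiple of $q-1$ contribute, and each surviving term contributes $c_V\, q^{s(V)}(q-1)^{n-s(V)}$, where $s(V)=\#\{i:v_i=0\}$. The crucial geometric input is the Newton polytope: if $k<\mu_f(q-1)$, then any putative contributing $V$ with all $v_i>0$ can be written $V=(q-1)W$, forcing $W\in\tfrac{k}{q-1}\Delta(f)\cap\mathbb{Z}^n_{>0}$, which is empty by the definition of $\mu_f$. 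Hence every surviving term must have $s(V)\geq 1$, so $\textrm{ord}_p(S_k(g))\geq\textrm{ord}_p(q)$. If in addition $k<q$, then $\textrm{ord}_p(pk)=1+\textrm{ord}_p(k)\leq\textrm{ord}_p(q)$, so $pk\mid S_k(g)$ and hence $U(g)>k$. Taking the supremum over such $k$ gives $U(g)\geq\min\{q,\mu_f(q-1)\}$, which combined with Lemma~\ref{Ulemma} yields the theorem.

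The main obstacle I anticipate is the bookkeeping between the two Teichm\"uller lifts: the univariate lift $\tilde g\in\mathbb{Z}_{q^n}[y]$ implicit in Lemma~\ref{Ulemma} and the multivariate lift $\tilde g\in\mathbb{Z}_{q^n}[x_1,\ldots,x_n]$ that the Newton polytope actually controls. These two lifts do not agree as elements of $\mathbb{Z}_{q^n}$, but they induce the same function on $\mathbb{F}_{q^n}$ and so their power sums agree modulo $p$. I would handle this either by tracking $p$-adic valuations carefully to confirm that the multivariate $S_k(g)$ may be substituted into Lemma~\ref{Ulemma}, or by reproving a multivariate analogue of Lemma~\ref{Ulemma} directly from the lift $\tilde g(x_1,\ldots,x_n)$, along the lines of the argument in \cite{MWWValue}.
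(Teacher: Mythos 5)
Your proposal follows essentially the same route as the paper: reduce to Lemma~\ref{Ulemma} via the univariate reinterpretation $g$ of $f$, then bound $U(g)\geq\min\{q,\,\mu_f(q-1)\}$ by expanding $g(x_1,\ldots,x_n)^k$ monomial by monomial, using (\ref{charsum}) to kill exponents not divisible by $q-1$, invoking the definition of $\mu_f$ to exclude strictly positive exponent vectors when $k<\mu_f(q-1)$, and comparing $1+v_p(k)$ with $v_p(q)$ when $k<q$. The lift bookkeeping you flag is handled exactly as in your second suggested fix: the paper (following \cite{MWWValue}) applies the lemma with the multivariate Teichm\"uller lift, rewriting $S_k(g)$ as a sum over $L_q^n$ from the outset, since the lemma only depends on a lift of the map $g$.
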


\begin{proof} First, construct $g$ from our polynomial vector $f$, as we did in Section \ref{From2}. Viewing $g$ as a univariate polynomial $g(x)$, we are allowed to apply Lemma~\ref{Ulemma} to bound $|V_f|$ using $U(g)$. We then consider $g$ as multivariate $g(x_1,...,x_n)$, which allows us to define $\Delta(g)$ and $\mu_g$. Noting that $\Delta(f) = \Delta(g)$ and $\mu_f = \mu_g$ by our definition in Section \ref{From2}, it suffices to prove the following lemma on $U(g)$:
\end{proof}

\begin{lemma} \label{Uglemma} $U(g) \geq \textnormal{min}\{\mu_f(q-1),\hspace{.25pc} q\}.$
\end{lemma}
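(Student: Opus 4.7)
The plan is to estimate the character sum
\[
S_k(g) \;:=\; \sum_{(x_1,\ldots,x_n)\in L_q^n}\tilde g(x_1,\ldots,x_n)^k
\]
underlying $U(g)$ by expanding $\tilde g^k$ into monomials and exploiting the fact that the sum factors coordinatewise, so equation~(\ref{charsum}) applies in each variable separately. Writing $\tilde g(x_1,\ldots,x_n)^k=\sum_V c_V X^V$, the support lies in $k\Delta(g)\cap\mathbb Z^n_{\geq 0}=k\Delta(f)\cap\mathbb Z^n_{\geq 0}$, so
\[
S_k(g) \;=\; \sum_V c_V\prod_{i=1}^n \Bigl(\sum_{x_i\in L_q}x_i^{v_i}\Bigr).
\]
By (\ref{charsum}), each inner factor equals $q$ if $v_i=0$, equals $q-1$ if $v_i$ is a positive multiple of $q-1$, and vanishes otherwise. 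Writing $J(V):=\{i:v_i=0\}$, only monomials whose exponent vector lies in $(\{0\}\cup(q-1)\mathbb Z_{>0})^n$ survive, each contributing $c_V\, q^{|J(V)|}(q-1)^{n-|J(V)|}$.

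The Newton polytope input then enters in a single clean step. Suppose $k<\mu_f(q-1)$. If some surviving $V$ had $J(V)=\varnothing$, then $V=(q-1)W$ for some $W\in\mathbb Z^n_{>0}$, and $V\in k\Delta(f)$ forces $W\in\frac{k}{q-1}\Delta(f)\cap\mathbb Z^n_{>0}$, contradicting $\mu_f>\frac{k}{q-1}$. Hence $|J(V)|\geq 1$ for every surviving $V$, every term of $S_k(g)$ is divisible by $q$, and so $q\mid S_k(g)$. If additionally $k<q$, writing $q=p^s$ yields $v_p(k)\leq s-1$, hence $v_p(S_k(g))\geq s\geq 1+v_p(k)=v_p(pk)$ in $\mathbb Z_{q^n}$; therefore $pk\mid S_k(g)$. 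Combining, $S_k(g)\equiv 0\pmod{pk}$ for every $1\leq k<\min\{\mu_f(q-1),q\}$, forcing $U(g)\geq\min\{\mu_f(q-1),q\}$ as desired.

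The main obstacle is conceptual rather than technical: one must interpret $U(g)$ via the multivariate sum over $L_q^n$ rather than the univariate Teichm\"uller sum over $L_{q^n}$, since the Newton polytope is only visible in the $n$-variable expansion. This is the same reformulation implicit in Mullen--Wan--Wang while deriving (\ref{MWWBound}), so Lemma~\ref{Ulemma} remains applicable. Once that viewpoint shift is made, the polytope step is essentially the combinatorial tautology that $\frac{k}{q-1}\Delta(f)$ first meets the open positive orthant $\mathbb Z^n_{>0}$ precisely at $k=\mu_f(q-1)$, and the final $p$-adic divisibility comes for free because $\mathbb Z_{q^n}$ is unramified over $\mathbb Z_p$ so the $p$-adic valuation on $\mathbb Z$ is preserved.
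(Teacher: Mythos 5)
Your proposal is correct and follows essentially the same route as the paper: rewrite $S_k(g)$ as a multivariate sum over $L_q^n$, expand $g^k$ into monomials, apply the coordinatewise character sum (\ref{charsum}) so that only exponent vectors with all nonzero entries divisible by $q-1$ survive, rule out full-support survivors via the dilation $\frac{k}{q-1}\Delta(f)$ and the definition of $\mu_f$, and finish with the valuation comparison $1+v_p(k)\leq v_p(q)$ from $k<q$. The only cosmetic difference is that you bound the whole sum at once rather than each monomial sum separately, which changes nothing substantive.
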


\begin{proof} Assume the coefficients of $g(x_1,...,x_n)$ are lifted to characteristic zero over $L_{q^n}$, our Teichm$\ddot{\textrm{u}}$ller lifting of $\mathbb{F}_{q^n}$. Remember that $U(g)$ is defined over univariate polynomials to be the smallest positive integer $k$ such that \begin{eqnarray*} S_k(g) \defeq \sum_{x \in L_{q^n}} {g}(x)^k \not\equiv 0 \textrm{ (mod } pk). \end{eqnarray*} However, using $x = x_1e_1 + \cdots + x_ne_n$ as in section \ref{From2}, we can rewrite $S_k(g)$ in terms of multivariate $g(x_1,...,x_n)$. This means $U(g)$ is the smallest positive integer $k$ such that  \begin{eqnarray*} S_k(g) = \sum_{(x_1,...,x_n) \in L_q^n} {g}(x_1,...,x_n)^k \not\equiv 0 \textrm{ (mod } pk). \end{eqnarray*}

Let $k \in \mathbb{Z}_{>0}$ be such that $k < \textnormal{min}\{\mu_f(q-1),\hspace{.25pc} q\}.$ Expand $g(x_1,...,x_n)^k=\sum_{j=1}^m a_jX^{V_j}$ as a polynomial in the $n$ variables $x_1,...,x_n$ (see $(\ref{polynota})$). Since $S_k(g)$ is a finite sum, it can be broken up over the monomials of $g(x_1,...,x_n)^k$. Therefore, it suffices to prove \begin{eqnarray} \label{xvjsum} \sum_{(x_1,...,x_n) \in L_q^n}X^{V_j} \equiv 0 \textrm{ (mod } pk), \hspace{.25pc} 1 \leq j \leq m.\end{eqnarray} If we denote $\ell_j \defeq \#\{v_{ij}, \hspace{.25 pc} 1 \leq i \leq n| v_{ij} \neq 0 \}$, i.e. $\ell_j$ denotes the number of nonzero $v_{ij}$'s with $1 \leq i \leq n$, then we have exactly $n - \ell_j$ zero $v_{ij}$'s, implying that \begin{eqnarray*} \sum_{(x_1,...,x_n) \in L_q^n}X^{V_j} \equiv 0 \textrm{ (mod } q^{n-\ell_j}). \end{eqnarray*} Now let $v_p$ denote the $p$-adic valuation satisfying $v_p(p)=1$. If the inequality \begin{eqnarray*} v_p(q)(n-\ell_j) \geq 1 + v_p(k) \end{eqnarray*} is satisfied, then (\ref{xvjsum}) is true and we are done.

\vspace{1 pc}

Considering $X^{V_j} = x_1^{v_{1j}}...x_n^{v_{nj}}$, the sum on the left side is identically zero if one of the $v_{ij}$ is not divisible by $q-1$ (see (\ref{charsum})). Thus, we shall assume that all $v_{ij}$'s are divisible by $q-1$ (Otherwise (\ref{xvjsum}) is satisfied and we are done without even using our inequality on $k$). Then the total degree of $X^{V_j}$ is \begin{eqnarray*} v_{1j}+ \cdots +v_{nj} \geq (q-1)\ell_j.\end{eqnarray*} Now, the lattice points of $g$ are contained within $\Delta(g)$ by definition, and this implies our lattice points $V_j$ of $g^k$ are contained within $k\Delta(g)$, the dilation of the polytope $\Delta(g)$ by $k$. But since $(q-1) \mid v_{ij}$, we have that $V_j \in (q-1)\mathbb{Z}^n_{\geq 0}$ as well. 

\vspace{1pc}

If we further assume that $V_{j}$ has no zero coordinates, i.e. $\ell_j = n$, this implies \begin{eqnarray*}\left(\frac{k}{q-1}\Delta(g)\right) \cap \mathbb{Z}^n_{>0} \neq \varnothing.\end{eqnarray*} This statement tells us, by the definition of $\mu_f$, that $\frac{k}{q-1} \geq \mu_f$. In other words, \begin{eqnarray*} k \geq \mu_f(q-1). \end{eqnarray*} This contradicts our assumption that $k < \textnormal{min}\{\mu_f(q-1),\hspace{.25pc} q\} \leq \mu_f(q-1).$

\vspace{1pc}

Therefore, when $k < \textnormal{min}\{\mu_f(q-1),\hspace{.25pc} q\},$ we have that $\ell_j < n$, and $n - \ell_j > 0.$ This case, since $k < q,$ gives us $q \nmid k,$ and \begin{eqnarray*} 1 + v_p(k) \leq v_p(q) \leq v_p(q)(n - \ell_j). \end{eqnarray*} This implies that \begin{eqnarray*} S_k(g) \equiv 0 \textrm{ (mod } q^{n-\ell_j}) \equiv 0 \textrm{ (mod } p^{1 + v_p(k)}) \equiv 0 \textrm{ (mod } pk) \end{eqnarray*} and we are done. Lemma~\ref{Uglemma} and the main result of our paper are proved. \end{proof}

It is easy to show that this new bound is an improvement over the previously known result by Mullen, Wan, and Wang described at (\ref{MWWBound}). Adolphson and Sperber gave an elementary proof in \cite{Adolph} that $\mu_f$ $\geq$ $n / \textrm{deg } f$ for all $f$ over any arbitrary vector space $\mathbb{F}_q^n$. For an illustration in two dimensions, please refer to Figure 1 at the end of the text. It can also be shown that this new bound is sharp. Let $f(x_1, x_2) = (x_1, x_1^a x_2)$ with $a$ in $\mathbb{N}$. It is clear that $\Delta(f) \cap \mathbb{Z}^2_{>0} = \{(a,1)\}$, which tells us that $\mu_f = 1$, and we also have $|V_f| = q^2 - (q-1)$. Note that, in general, it is not immediately clear how large of an improvement our new result provides over our previously known bound. Furthermore, an effective method for calculating $\mu_f$ is not directly clear from the definition given. Could there be an efficient way to calculate or estimate $\mu_f$?






\pagebreak

\begin{figure}[h!]
  \centering
\includegraphics[scale=1]{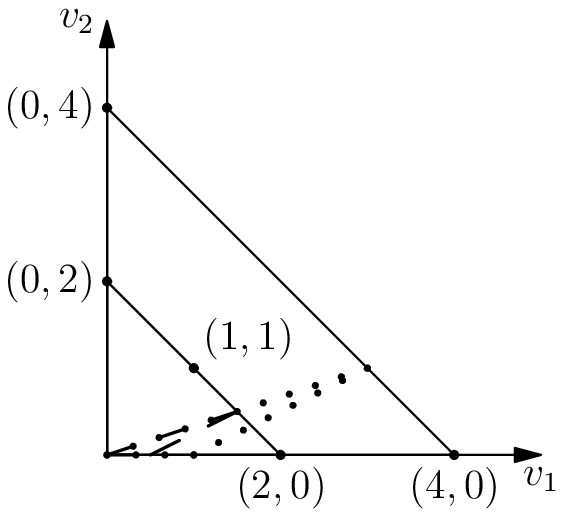}
 \caption{The polytopes of $f(x_1, x_2) = x_1 + x_1^3x_2$ and $h(x_1, x_2) = x_1^4 + x_2^4$, alongside their contractions by $\frac{n}{d} = \frac{2}{4}$. Note that both polynomials are degree 4, $\Delta(f) \cap \mathbb{Z}^2_{>0} = \{(3,1)\}$, and $\left(\frac{2}{4}\Delta(f)\right) \cap \mathbb{Z}^2_{>0} = \varnothing$, but $\left(\frac{2}{4}\Delta(h)\right) \cap \mathbb{Z}^2_{>0} = \{(1,1)\}$. Therefore, $\mu_h = \frac{2}{4} < \mu_f = 1$.}
\end{figure}

\end{document}